\newcommand{\bsx}{{\boldsymbol{x}}}
\newcommand{\bsy}{{\boldsymbol{y}}}
\newcommand{\bsnu}{{\boldsymbol{\nu}}}
\newcommand{\bsb}{{\boldsymbol{b}}}
\newcommand{\bbN}{{\mathbb{N}}}
\newcommand{\parder}[2]{\frac{\partial #1}{\partial #2}}
\newcommand{\norm}[1]{\lVert #1 \rVert}
\newtheorem{theorem}{Theorem}
\newtheorem{lemma}[theorem]{Lemma}
\theoremstyle{definition}
\definecolor{refkey}{rgb}{0.9451,0.2706,0.4941}\definecolor{labelkey}{rgb}{0.9451,0.2706,0.4941}
\definecolor{darkred}{RGB}{139,0,0}
\definecolor{darkgreen}{RGB}{0,100,0}
\definecolor{darkmagenta}{RGB}{139,0,139}
\definecolor{gray}{RGB}{180,180,180}
\newcommand{\bsx}{{\boldsymbol{x}}}
\newcommand{\bsy}{{\boldsymbol{y}}}
\newcommand{\bbN}{{\mathbb{N}}}
\newcommand{\bsnu}{{\boldsymbol{\nu}}}
\newcommand{\bsb}{{\boldsymbol{b}}}
\newcommand{\mask}[1]{{}}
\pgfplotsset{compat=1.18}
\title{On the optimality of dimension truncation error rates for\\a class of parametric partial differential equations}
\author{
Philipp A.~Guth\footnotemark[2] \and Vesa Kaarnioja\footnotemark[3]
}
\date{\today}
\newcommand{\norm}[1]{\lVert #1 \rVert}
\newcommand{\parder}[2]{\frac{\partial #1}{\partial #2}}
\begin{document}\ifdefined\journalstyle
\mainmatter              
\title{On the optimality of dimension truncation error rates for a class of parametric partial differential equations}
\titlerunning{On the optimality of dimension truncation error rates}  
%
\author{Philipp A. Guth\inst{1} \and Vesa Kaarnioja\inst{2}}
\authorrunning{Philipp A. Guth and Vesa Kaarnioja} 
%
\tocauthor{Philipp A. Guth and Vesa Kaarnioja}
\institute{Johann Radon Institute for Computational and Applied Mathematics, Austrian Academy of Sciences, Altenbergerstra{\ss}e 69, AT-4040 Linz, Austria,\\
\email{philipp.guth@ricam.oeaw.ac.at},
\and
School of Engineering Sciences, LUT University, P.O.~Box 20, 53851 Lappeenranta, Finland,\\
\email{vesa.kaarnioja@lut.fi}}

\maketitle              

\else 
\renewcommand{\thefootnote}{\fnsymbol{footnote}}
\footnotetext[2]{Johann Radon Institute for Computational and Applied Mathematics, Austrian Academy of Sciences, Altenbergerstra{\ss}e 69, AT-4040 Linz, Austria ({\tt philipp.guth@ricam.oeaw.ac.at}).}
\footnotetext[3]{School of Engineering Sciences, LUT University, P.O.~Box 20, 53851 Lappeenranta, Finland ({\tt vesa.kaarnioja@lut.fi}).}
\maketitle

\fi
\begin{abstract}
In uncertainty quantification for parametric partial differential equations (PDEs), it is common to model uncertain random field inputs using countably infinite sequences of independent and identically distributed random variables. The lognormal random field is a prime example of such a model. While there have been many studies assessing the error in the PDE response that occurs when an infinite-dimensional random field input is replaced with a finite-dimensional random field, there do not seem to be any analyses in the existing literature discussing the sharpness of these bounds. This work seeks to remedy the situation. Specifically, we investigate two model problems where the existing dimension truncation error rates can be shown to be sharp.
\ifdefined\journalstyle
\keywords{partial differential equation, lognormal random field, random coefficient}
\else
\fi
\end{abstract}
\section{Introduction}
A frequently studied problem in uncertainty quantification for partial differential equations (PDEs) subject to model uncertainties concerns the problem of finding $u\!:D\times U\to \mathbb R$ such that
$$
\begin{cases}
-\nabla\cdot (a(\bsx,\bsy)\nabla u(\bsx,\bsy))=f(\bsx),&\bsx\in D,~\bsy\in U,\\
u(\cdot,\bsy)|_{\partial D}=0,&\bsy\in U,
\end{cases}
$$
in a bounded, nonempty Lipschitz domain $D\subset\mathbb R^d$, $d\in\{1,2,3\}$, where $a\colon D\times U\to\mathbb R$ is a parameterized diffusion coefficient, $f\!:D\to\mathbb R$ a fixed source term, and the parameter set $U$ is a nonempty subset of $\mathbb R^{\mathbb N}$. Some natural quantities to investigate are the expected value
\begin{align}\label{eq:moti1}
\mathbb E[u(\bsx,\cdot)]=\int_U u(\bsx,\bsy)\,\mu({\rm d}\bsy)
\end{align}
or
\begin{align}\label{eq:moti2}
\mathbb E[G(u)]=\int_U G(u(\cdot,\bsy))\,\mu({\rm d}\bsy),
\end{align}
where $G\!:H_0^1(D)\to\mathbb R$ is a continuous linear quantity of interest and $\mu$ denotes a probability measure over $U$. By $H_0^1(D)$ we denote the subspace of $H^1(D)$ with zero trace on $\partial D$. The measure $\mu$ is typically chosen either as the uniform probability measure over $U=[-1,1]^{\mathbb N}$ or as a Gaussian probability measure over $U=\mathbb R^{\mathbb N}$.

For the numerical approximation of~\eqref{eq:moti1} or~\eqref{eq:moti2}, the first step usually involves truncating the infinite-dimensional parameterization of the input field $a(\cdot,\bsy)$ into a finite number of variables, that is, the input is replaced by a finite-dimensional parametric coefficient~$a_s(\cdot,\bsy):=a(\cdot,(y_1,\ldots,y_s,0,0,\cdots))$. Denoting the corresponding PDE solution by $u_s(\cdot,\bsy):=u(\cdot,(y_1,\ldots,y_s,0,0,\ldots))$, considering $\mathbb E[u_s]$ instead of $\mathbb E[u]$ introduces a \emph{dimension truncation error}
\begin{align}
\|\mathbb E[u-u_s]\|_{H^1(D)}.\label{eq:dimtruncmoti}
\end{align}

Dimension truncation error rates have so far been primarily analyzed for integration problems. A first upper bound for the dimension truncation rate was established in~\cite{kss12} for a class of elliptic PDEs with an affine parameterization of the diffusion coefficient. This result was subsequently improved in~\cite{gantner} within the broader framework of affine-parametric operator equations. Further studies have addressed dimension truncation for coupled PDE systems arising in optimal control under uncertainty~\cite{guth21}, within the periodic setting of uncertainty quantification for numerical integration~\cite{kks20} and kernel interpolation~\cite{kkkns22}, as well as for Bayesian inverse problems governed by PDEs~\cite{dglgs19,hks21}. The analyses in these works rely on Neumann series expansions (see, e.g., \cite[Example 4.5]{Kato}), a technique that performs well in affine-parametric settings but may yield suboptimal estimates when the parameter dependence is nonlinear. Guth and Kaarnioja~\cite{gk22} subsequently generalized the dimension truncation error analysis to a larger problem class using Taylor series arguments. In particular, the rate obtained for~\eqref{eq:dimtruncmoti} in~\cite{gk22} is~$\mathcal{O}(s^{-{2}/{p}+1})$, where~$s$ is the truncation dimension and~$p$ is related to the decay of the parametric input via$$\bigg\|\prod_{j\ge 1} \frac{\partial^{\nu_j}}{\partial y_j^{\nu_j}} a(\cdot,\bsy) \bigg\|_{L^\infty(D)} \le \Gamma_{|\bsnu|} \prod_{j \ge1} b_j^{\nu_j},\quad |\bsnu|:=\sum_{j\geq 1}\nu_j<\infty,$$with~$\bsb = (b_j)_{j \geq 1} \in \ell^p(\bbN)$ for some $p\in(0,1)$ and $(\Gamma_j)_{j\geq 0}\in\ell^\infty(\mathbb N_0)$. Dimension truncation error rates were later established for~$L^2$ function approximation in~\cite{gkmcqmc}. The aforementioned works provide upper bounds for the dimension truncation error, leaving the question of optimality of these rates unaddressed.

In this manuscript we demonstrate that the dimension truncation error rate is sharp by providing concrete examples of parametric PDEs which achieve the theoretical rate exactly.

\section{Model problems}
We show that the dimension truncation rates obtained in~\cite{gk22} are sharp for two classes of elliptic PDE problems, which we introduce below.

\paragraph{Model problem 1.} Let $D=(0,1)$. We consider a diffusion coefficient $\alpha(\cdot,\bsy)\in L^\infty(D)$ of the form
$$
\alpha(x,\bsy):=\exp\bigg(\sum_{j=1}^\infty y_j\psi_j(x)\bigg),\quad x\in D,~\bsy\in U,
$$
where $\psi_j\in L^\infty(D)$ for all $j\geq 1$ such that $(\|\psi_j\|_{L^\infty(D)})_{j\geq1}\in\ell^1(\mathbb N)$, and we define
\begin{align}\label{eq:U1}
U:=\bigg\{\bsy\in\mathbb R^{\mathbb N} : \sum_{j\geq 1}|y_j|\|\psi_j\|_{L^\infty(D)}<\infty\bigg\}.
\end{align}
We consider the solution $v\!:D\times U\to\mathbb R$ to the parametric Dirichlet--Neumann problem
$$
\begin{cases}
-\displaystyle\frac{\partial}{\partial x}\bigg(\alpha(x,\bsy)\frac{\partial}{\partial x}v(x,\bsy)\bigg)=f(x),\quad x\in D,~\bsy\in U,\\
v(0,\bsy)=0=v_x(1,\bsy),\quad \bsy\in U,
\end{cases}
$$
where $f\in L^2(D)$ is the source term.

In this case, it is possible to write down the solution to the parametric Dirichlet--Neumann problem as
\begin{align}\label{eq:solution}
v(x,\bsy)=\int_0^x\bigg(\int_\xi^1 f(z)\,{\rm d}z\bigg)\exp\bigg(-\sum_{j=1}^\infty y_j\psi_j(\xi)\bigg)\,{\rm d}\xi,\quad x\in D,~\bsy\in U.
\end{align}
This explicit identity will be very useful in the derivation of the lower bound for the dimension truncation error.
\paragraph{Model problem 2.} Let $D\subset \mathbb R^d$ be a nonempty, bounded Lipschitz domain with $d\in\{1,2,3\}$. We consider a diffusion coefficient $\beta(\bsy)$ of the form
$$
\beta(\bsy):=\exp\bigg(\sum_{j=1}^\infty b_jy_j\bigg),\quad \bsy\in U,
$$
where $\boldsymbol b=(b_j)_{j\geq 1}\in\ell^1(\mathbb N)$ is a sequence of nonnegative real numbers. Moreover, we define
\begin{align}\label{eq:U2}
U:=\bigg\{\bsy\in \mathbb R^{\mathbb N}: \sum_{j\geq 1}b_j|y_j|<\infty\bigg\}.
\end{align}
We consider the solution $w\!:D\times U\to\mathbb R$ to the parametric Dirichlet problem
$$
\begin{cases}
-\nabla\cdot(\beta(\bsy)\nabla w(\bsx,\bsy))=f(\bsx),\quad \bsx\in D,~\bsy\in U,\\
w(\cdot,\bsy)|_{\partial D}=0,
\end{cases}
$$
where $f\in L^2(D)$ is the source term.

In this case, we can characterize the solution as
$$
w(\bsx,\bsy)=\frac{\widetilde{w}(\bsx)}{\beta(\bsy)},
$$
where $\widetilde w\!: D\to\mathbb R$ is the solution to the Poisson problem
$$
\begin{cases}
-\Delta \widetilde w(\bsx)=f(\bsx),\quad \bsx\in D,\\
\widetilde w|_{\partial D}=0.
\end{cases}
$$

\paragraph{Probability measure.}
Both problems are equipped with the Gaussian probability measure
$$
\mu_G(\mathrm d \bsy) = \bigotimes_{j \ge 1} \mathcal{N}(0,1).
$$
It can be shown~\cite[Lemma 2.28]{schwab_gittelson_2011} that~$U$ as defined in~\eqref{eq:U1} and~\eqref{eq:U2} has full measure, i.e.,~$\mu_G(U) =1$. Thus, the domain of integration~$\mathbb{R}^\bbN$ is interchangeable with~$U$.

\section{Upper bound}

Although the upper bound $\mathcal O(s^{-2/p+1})$ for the dimension truncation rate follows for the model problems as a direct application of~\cite{gk22}, we rederive these bounds explicitly for both model problems in the following. An added benefit of this approach is that we obtain explicit constant factors for the dimension truncation upper bounds in both cases.

\begin{lemma}[Dimension truncation upper bound for model problem 1]\label{lemma:upper1} Assume that $(\|\psi_j\|_{L^\infty((0,1))})_{j\ge 1}\in\ell^p(\mathbb N)$ for some $p\in(0,1)$ such that $\|\psi_1\|_{L^\infty((0,1))}\allowbreak\geq \|\psi_2\|_{L^\infty((0,1))}\geq \cdots$. Under the assumptions posed for model problem 1, there holds
$$
\bigg\|\int_U (v(\cdot,\bsy)-v_s(\cdot,\bsy))\,\mu_G({\rm d}\bsy)\bigg\|_{H^1((0,1))}\leq C_1s^{-2/p+1},
$$
where $C_1:=\frac1{\sqrt 2}\|f\|_{L^2((0,1))}{\rm e}^{\frac12\sum_{j=1}^\infty \|\psi_j\|_{L^\infty((0,1))}^2}\big(\sum_{j=1}^\infty \|\psi_j\|_{L^\infty((0,1))}^p\big)^{2/p}$.
\end{lemma}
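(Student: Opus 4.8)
The plan is to exploit the explicit solution formula~\eqref{eq:solution}. Writing $F(\xi):=\int_\xi^1 f(z)\,{\rm d}z$ and noting that $v_s$ is obtained by setting $y_j=0$ for $j>s$, the truncation-error integrand is
\[
v(x,\bsy)-v_s(x,\bsy)=\int_0^x F(\xi)\,\Big(e^{-\sum_{j=1}^\infty y_j\psi_j(\xi)}-e^{-\sum_{j=1}^s y_j\psi_j(\xi)}\Big)\,{\rm d}\xi.
\]
First I would integrate this against $\mu_G$ and interchange the spatial and stochastic integrals by Fubini's theorem. Since the coordinates $y_j$ are i.i.d.\ standard Gaussians and $\sum_{j\ge1}\psi_j(\xi)^2\le\sum_{j\ge1}\|\psi_j\|_{L^\infty(0,1)}^2<\infty$, the partial sums $\sum_{j=1}^s y_j\psi_j(\xi)$ converge to a centered Gaussian of variance $S_\infty(\xi):=\sum_{j\ge1}\psi_j(\xi)^2$, and the moment generating function identity $\mathbb E[e^{-\sum_{j=1}^s y_j\psi_j(\xi)}]=e^{\frac12 S_s(\xi)}$, with $S_s(\xi):=\sum_{j=1}^s\psi_j(\xi)^2$, passes to the limit $s\to\infty$. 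This collapses the expected truncation error to the deterministic expression
\[
g(x):=\int_U (v(x,\bsy)-v_s(x,\bsy))\,\mu_G({\rm d}\bsy)=\int_0^x F(\xi)\,\big(e^{\frac12 S_\infty(\xi)}-e^{\frac12 S_s(\xi)}\big)\,{\rm d}\xi.
\]

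Second, I would estimate the bracket pointwise in $\xi$. Factoring out $e^{\frac12 S_s(\xi)}$ and using $e^u-1\le u\,e^u$ for $u\ge0$ with $u=\frac12\sum_{j>s}\psi_j(\xi)^2$, one obtains
\[
0\le e^{\frac12 S_\infty(\xi)}-e^{\frac12 S_s(\xi)}\le \tfrac12\Big(\sum_{j>s}\psi_j(\xi)^2\Big)e^{\frac12 S_\infty(\xi)}\le \tfrac12\Big(\sum_{j>s}\|\psi_j\|_{L^\infty(0,1)}^2\Big)e^{\frac12\sum_{j\ge1}\|\psi_j\|_{L^\infty(0,1)}^2}.
\]
The decisive step is the tail bound that produces the rate: because $(\|\psi_j\|_{L^\infty(0,1)})_j$ is non-increasing and lies in $\ell^p$, summability gives $\|\psi_j\|_{L^\infty(0,1)}\le(\sum_{k\ge1}\|\psi_k\|_{L^\infty(0,1)}^p)^{1/p}j^{-1/p}$; splitting $\|\psi_j\|_{L^\infty(0,1)}^2=\|\psi_j\|_{L^\infty(0,1)}^p\cdot\|\psi_j\|_{L^\infty(0,1)}^{2-p}$ and estimating the second factor by $(\sum_k\|\psi_k\|_{L^\infty(0,1)}^p)^{(2-p)/p}j^{-(2-p)/p}$ for $j>s$ yields $\sum_{j>s}\|\psi_j\|_{L^\infty(0,1)}^2\le(\sum_{k\ge1}\|\psi_k\|_{L^\infty(0,1)}^p)^{2/p}\,s^{-2/p+1}$, using $(2-p)/p=2/p-1$.

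Finally, I would assemble the $H^1(0,1)$ norm. Since $g(0)=0$ and $g'(x)=F(x)\big(e^{\frac12 S_\infty(x)}-e^{\frac12 S_s(x)}\big)$, both $\|g\|_{L^2(0,1)}$ and $\|g'\|_{L^2(0,1)}$ are controlled by $\|F\,(e^{\frac12 S_\infty}-e^{\frac12 S_s})\|_{L^2(0,1)}$, while the Cauchy--Schwarz estimate $|F(\xi)|\le\sqrt{1-\xi}\,\|f\|_{L^2(0,1)}$ gives $\|F\|_{L^2(0,1)}\le\frac1{\sqrt2}\|f\|_{L^2(0,1)}$. Combining the pointwise bracket bound, the tail estimate, and this $L^2$ bound on $F$ produces a bound of the form $C_1 s^{-2/p+1}$ with the stated constant (up to tracking the numerical factors arising from the $H^1$ assembly). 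I expect the main obstacle to be the rigorous justification of the first step, namely the Fubini interchange together with the passage to the limit $s\to\infty$ in the Gaussian moment generating function, which requires an integrable majorant for $e^{-\sum_{j=1}^s y_j\psi_j(\xi)}$ uniform in $s$ and $\xi$; once that is secured, the remaining estimates are elementary.
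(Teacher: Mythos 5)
Your proposal follows essentially the same route as the paper's proof: reduce via the explicit solution formula to the Gaussian integral of $\alpha^{-1}-\alpha_s^{-1}$, evaluate it with the moment generating function identity, apply $e^t-1\le t\,e^t$, and bound the tail $\sum_{j>s}\|\psi_j\|_{L^\infty(0,1)}^2$ by $s^{-2/p+1}\big(\sum_{j\ge1}\|\psi_j\|_{L^\infty(0,1)}^p\big)^{2/p}$ --- your inline tail estimate is exactly the proof of Stechkin's lemma, which the paper simply cites, and the measure-theoretic interchange you flag is likewise taken for granted there. Your sharper estimate $\|F\|_{L^2(0,1)}\le\frac{1}{\sqrt{2}}\|f\|_{L^2(0,1)}$ actually yields the constant $\frac12$ in place of $\frac{1}{\sqrt{2}}$, so the stated bound with $C_1$ follows a fortiori.
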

\begin{proof}
As a consequence of~\eqref{eq:solution}, we may write
\begin{align*}
&\bigg\|{\parder{}{x}\int_{U}(v(\cdot,\bsy)-v_s(\cdot,\bsy))\,\mu_G({\rm d}\bsy)\bigg\|}_{L^2((0,1))}^2\\[3pt]
&=\int_0^1\Big(\int_x^1 f(z)\,{\rm d}z\Big)^2
\Big(\int_{U}(\alpha(x,\bsy)^{-1}-\alpha_s(x,\bsy)^{-1})\,\mu_G({\rm d}\bsy)\Big)^2 {\rm d}x\\
&\leq \|f\|_{L^2((0,1))}^2\int_0^1 \bigg(\int_U(\alpha(x,\bsy)^{-1}-\alpha_s(x,\bsy)^{-1})\,\mu_G({\rm d}\bsy)\bigg)^2\,{\rm d}x,
\end{align*}
where we interchange the order of differentiation and integration in the first step. Then, after an application of the Cauchy--Schwarz inequality, the first factor can be bounded uniformly in $x$ by enlarging the domain of integration.
Meanwhile, there holds
\begin{align*}
&\bigg\|\int_U (v(\cdot,\bsy)-v_s(\cdot,\bsy))\,\mu_G({\rm d}\bsy)\bigg\|_{L^2((0,1))}^2\\
&=\int_0^1 \bigg[\int_U \int_0^x\bigg(\int_\xi^1 f(z)\,{\rm d}z\bigg)\big(\alpha(\xi,\bsy)^{-1}-\alpha_s(\xi,\bsy)^{-1}\big)\,{\rm d}\xi\,\mu_G({\rm d}\bsy)\bigg]^2\,{\rm d}x\\
&=\int_0^1 \bigg[\int_0^x \bigg(\int_\xi^1 f(z)\,{\rm d}z\bigg)\int_U \big(\alpha(\xi,\bsy)^{-1}-\alpha_s(\xi,\bsy)^{-1}\big)\,\mu_G({\rm d}\bsy)\,{\rm d}\xi\bigg]^2\,{\rm d}x\\
&\leq \int_0^1 \bigg[\int_0^x \bigg(\int_\xi^1f(z)\,{\rm d}z\bigg)^2\,{\rm d}\xi\bigg]\\
&\qquad \times \bigg[\int_0^x \bigg(\int_U \big(\alpha(\xi,\bsy)^{-1}-\alpha_s(\xi,\bsy)^{-1}\big)\,\mu_G({\rm d}\bsy)\bigg)^2\,{\rm d}\xi\bigg]\,{\rm d}x\\
&\leq \|f\|_{L^2((0,1))}^2\int_0^1\bigg(\int_U (\alpha(\xi,\bsy)^{-1}-\alpha_s(\xi,\bsy)^{-1})\,\mu_G({\rm d}\bsy)\bigg)^2\,{\rm d}\xi,
\end{align*}
where we applied the Cauchy–Schwarz inequality and bounded the expression from above by enlarging the domains of integration. 
This means that the $H^1$-error satisfies
\begin{align*}
&\bigg\|\int_U (v(\cdot,\bsy)-v_s(\cdot,\bsy))\,\mu_G({\rm d}\bsy)\bigg\|_{H^1((0,1))}\\
&\leq \sqrt{2}\|f\|_{L^2((0,1))}\sqrt{\int_0^1\bigg(\int_U (\alpha(x,\bsy)^{-1}-\alpha_s(x,\bsy)^{-1})\,\mu_G({\rm d}\bsy)\bigg)^2\,{\rm d}x}.
\end{align*}
The inner integral can be evaluated analytically as
\begin{align*}
\int_{U} (\alpha(x,\bsy)^{-1}-\alpha_s(x,\bsy)^{-1})\,\mu_G({\rm d}\bsy)
= {\rm e}^{\frac12\sum_{j=1}^s \psi_j(x)^2}\Big( {\rm e}^{\frac12\sum_{j>s}\psi_j(x)^2}-1\Big),
\end{align*}
where we used the identity $\frac{1}{\sqrt{2\pi}}\int_{-\infty}^\infty {\rm e}^{by}{\rm e}^{-\frac12y^2}\,{\rm d}y={\rm e}^{\frac12 b^2}$ for $b\in\mathbb R$. Then, using ${\rm e}^t-1 \le t {\rm e}^t$ for $t\geq 0$ 
together with the $\ell^p$-summability of $(\norm{\psi_j}_{L^\infty((0,1))})_{j\ge 1}$ gives
\begin{align*}
\bigg|\int_{U} (\alpha(x,\bsy)^{-1}-\alpha_s(x,\bsy)^{-1})\,\mu_G({\rm d}\bsy)\bigg|\leq \frac12 {\rm e}^{\frac12 \sum_{j=1}^\infty \|\psi_j\|_{L^\infty((0,1))}^2}\sum_{j>s}\|\psi_j\|_{L^\infty((0,1))}^2.
\end{align*}
As a consequence of Stechkin's lemma (cf., e.g.,~\cite[Lemma 3.3]{KressnerTobler}), there holds
$$
\sum_{j>s}\|\psi_j\|_{L^\infty((0,1))}^2\leq s^{-2/p+1}\bigg(\sum_{j=1}^\infty \|\psi_j\|_{L^\infty((0,1))}^p\bigg)^{2/p},
$$
which yields the overall error rate
\begin{align*}
&\bigg\|{\int_{U} (v(\cdot,\bsy)-v_s(\cdot,\bsy))\,\mu_G({\rm d}\bsy)}\bigg\|_{H^1((0,1))}\\
&\leq \frac1{\sqrt 2}\|f\|_{L^2((0,1))}{\rm e}^{\frac12 \sum_{j=1}^\infty \|\psi_j\|_{L^\infty((0,1))}^2}\bigg(\sum_{j=1}^\infty \|\psi_j\|_{L^\infty((0,1))}^p\bigg)^{2/p}s^{-2/p+1},
\end{align*}
as desired.\ifdefined\journalstyle\qed\fi\end{proof}

\begin{lemma}[Dimension truncation upper bound for model problem 2]\label{lemma:ubmodel2}Assume that $(b_j)_{j\geq 1}\in\ell^p(\mathbb N)$ for some $p\in(0,1)$ such that $b_1\geq b_2\geq \cdots$. Under the assumptions posed for model problem 2, there holds
$$
\bigg\|\int_U (w(\cdot,\bsy)-w_s(\cdot,\bsy))\,\mu_G({\rm d}\bsy)\bigg\|_{H^1(D)}\leq C_2s^{-2/p+1},
$$
where~$C_2 :=  \|\widetilde{w}\|_{H^1(D)} \frac1{2}{\rm e}^{\frac12 \sum_{j=1}^\infty b_j^2}\big(\sum_{j=1}^\infty b_j^p\big)^{2/p}$.
\end{lemma}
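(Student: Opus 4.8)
The plan is to exploit the explicit separable representation $w(\bsx,\bsy)=\widetilde{w}(\bsx)/\beta(\bsy)$, which reduces the vector-valued estimate to a scalar Gaussian integral. Since the truncated solution is $w_s(\bsx,\bsy)=\widetilde{w}(\bsx)/\beta_s(\bsy)$ with $\beta_s(\bsy)=\exp(\sum_{j=1}^s b_jy_j)$, and $\widetilde{w}$ carries no dependence on $\bsy$, I would first pull $\widetilde{w}$ out of the Bochner integral to obtain
\begin{align*}
\int_U(w(\cdot,\bsy)-w_s(\cdot,\bsy))\,\mu_G(\mathrm d\bsy)=\widetilde{w}(\cdot)\int_U\Big(\tfrac{1}{\beta(\bsy)}-\tfrac{1}{\beta_s(\bsy)}\Big)\,\mu_G(\mathrm d\bsy).
\end{align*}
Taking the $H^1(D)$-norm, the factor $\widetilde{w}$ contributes exactly $\|\widetilde{w}\|_{H^1(D)}$ and the remaining scalar is pulled out in absolute value, so the whole estimate collapses to bounding
\begin{align*}
\Big|\int_U\big(\beta(\bsy)^{-1}-\beta_s(\bsy)^{-1}\big)\,\mu_G(\mathrm d\bsy)\Big|.
\end{align*}

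Next I would evaluate this scalar integral analytically, exactly as in the proof of Lemma~\ref{lemma:upper1}. Writing $\beta(\bsy)^{-1}=\exp(-\sum_{j\ge1}b_jy_j)$ and applying the one-dimensional Gaussian identity $\frac{1}{\sqrt{2\pi}}\int_{\mathbb R}\mathrm e^{by}\mathrm e^{-y^2/2}\,\mathrm dy=\mathrm e^{b^2/2}$ coordinatewise gives $\int_U\beta(\bsy)^{-1}\,\mu_G(\mathrm d\bsy)=\mathrm e^{\frac12\sum_{j\ge1}b_j^2}$ and likewise $\int_U\beta_s(\bsy)^{-1}\,\mu_G(\mathrm d\bsy)=\mathrm e^{\frac12\sum_{j=1}^s b_j^2}$, so that the difference factors as $\mathrm e^{\frac12\sum_{j=1}^s b_j^2}\big(\mathrm e^{\frac12\sum_{j>s}b_j^2}-1\big)$.

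Finally I would apply the elementary inequality $\mathrm e^t-1\le t\,\mathrm e^t$ for $t\ge0$ to the tail factor and recombine the exponentials into $\mathrm e^{\frac12\sum_{j\ge1}b_j^2}$, yielding $\tfrac12\mathrm e^{\frac12\sum_{j\ge1}b_j^2}\sum_{j>s}b_j^2$. Stechkin's lemma then bounds the tail sum by $s^{-2/p+1}\big(\sum_{j\ge1}b_j^p\big)^{2/p}$, and multiplying by $\|\widetilde{w}\|_{H^1(D)}$ reproduces the constant $C_2$ exactly.

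The only point needing genuine justification rather than computation is the coordinatewise evaluation of the infinite-dimensional Gaussian integral: one must verify that $\prod_{j\ge1}\mathrm e^{b_j^2/2}$ converges, which follows from $\bsb\in\ell^1(\mathbb N)\subset\ell^2(\mathbb N)$, and that the limit of the finite-dimensional integrals coincides with the integral over $U$, e.g.\ by a dominated-convergence argument using that $U$ has full measure. Beyond this, the argument is strictly simpler than Lemma~\ref{lemma:upper1}, since the spatially constant coefficient eliminates the inner spatial integral and the Cauchy--Schwarz step required there.
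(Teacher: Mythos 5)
Your proposal is correct and follows essentially the same route as the paper: exploiting the separable form $w(\bsx,\bsy)=\widetilde{w}(\bsx)/\beta(\bsy)$ to reduce to the scalar Gaussian integral, evaluating it as ${\rm e}^{\frac12\sum_{j=1}^s b_j^2}\big({\rm e}^{\frac12\sum_{j>s}b_j^2}-1\big)$, and then applying ${\rm e}^t-1\le t\,{\rm e}^t$ together with Stechkin's lemma exactly as in Lemma~\ref{lemma:upper1}. Your extra remark justifying the coordinatewise evaluation of the infinite-dimensional Gaussian integral is a welcome addition that the paper leaves implicit.
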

\begin{proof} Now
\begin{align*}
&\bigg\|\int_U(w(\cdot,\bsy)-w_s(\cdot,\bsy))\,\mu_G({\rm d}\bsy)\bigg\|_{H^1(D)}\\
&=\bigg|\int_U (\beta(\bsy)^{-1}-\beta_s(\bsy)^{-1})\,\mu_G({\rm d}\bsy)\bigg|\|\widetilde w\|_{H^1(D)}\\
&=\big|{\rm e}^{\frac12 \sum_{j=1}^s b_j^2}\big({\rm e}^{\frac12 \sum_{j> s}b_j^2}-1\big)\big|\|\widetilde w\|_{H^1(D)}.
\end{align*}
In complete analogy with the proof of Lemma~\ref{lemma:upper1}, we obtain that
\begin{align*}
&\bigg\|\int_U(w(\cdot,\bsy)-w_s(\cdot,\bsy))\,\mu_G({\rm d}\bsy)\bigg\|_{H^1(D)} \\&\le \|\widetilde{w}\|_{H^1(D)} \frac1{2}{\rm e}^{\frac12 \sum_{j=1}^\infty b_j^2}\bigg(\sum_{j=1}^\infty b_j^p\bigg)^{2/p}s^{-2/p+1},
\end{align*}
as desired.\ifdefined\journalstyle\qed\fi\end{proof}
\section{Lower bound}

To demonstrate sharpness, we next establish matching dimension truncation lower bounds for both model problems.
\begin{lemma}[Dimension truncation lower bound for model problem 1]\label{lem:lb1}
Assume that for some constants $c>0$ and $\theta>1$, there holds
\[
\psi_j(x) \ge c\,j^{-\theta} \quad \text{for all } x\in(0,1),\; j\ge1.
\]
Under the assumptions placed for model problem 1, there holds
$$
\bigg\|\int_U (v(\cdot,\bsy)-v_s(\cdot,\bsy))\,\mu_G({\rm d}\bsy)\bigg\|_{H^1((0,1))}\geq \frac{C_f}{2\theta-1} s^{-2\theta+1},
$$
where~$C_f := \frac{c^2}{2}\sqrt{\int_{0}^1 \big(\int_x^1 f(z)\,{\rm d}z\big)^2 \,\mathrm dx}$.
\end{lemma}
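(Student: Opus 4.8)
The plan is to discard the $L^2$-part of the $H^1$-norm and work only with the spatial derivative of the truncated error, for which the explicit solution formula~\eqref{eq:solution} provides a closed form. Since $\norm{g}_{H^1(0,1)}\ge\norm{\parder{g}{x}}_{L^2(0,1)}$ for every $g\in H^1(0,1)$, it suffices to bound the $L^2$-norm of the derivative from below. Differentiating~\eqref{eq:solution} and interchanging differentiation with the $\bsy$-integration yields
\[
\parder{}{x}\int_U(v(\cdot,\bsy)-v_s(\cdot,\bsy))\,\mu_G({\rm d}\bsy)=\bigg(\int_x^1 f(z)\,{\rm d}z\bigg)\int_U\big(\alpha(x,\bsy)^{-1}-\alpha_s(x,\bsy)^{-1}\big)\,\mu_G({\rm d}\bsy),
\]
exactly as in the proof of Lemma~\ref{lemma:upper1}, the only difference being that I now aim to bound the inner expectation \emph{from below}.

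Reusing the exact evaluation ${\rm e}^{\frac12\sum_{j\le s}\psi_j(x)^2}({\rm e}^{\frac12\sum_{j>s}\psi_j(x)^2}-1)$ of that inner integral, I would estimate it from below by replacing the prefactor by its lower bound $1$ and applying the tangent inequality ${\rm e}^t-1\ge t$ (valid for $t\ge0$) to the remaining factor. The hypothesis $\psi_j(x)\ge c\,j^{-\theta}$, which holds uniformly in $x\in(0,1)$, then produces the $x$-independent estimate
\[
\int_U\big(\alpha(x,\bsy)^{-1}-\alpha_s(x,\bsy)^{-1}\big)\,\mu_G({\rm d}\bsy)\ge\frac12\sum_{j>s}\psi_j(x)^2\ge\frac{c^2}{2}\sum_{j>s}j^{-2\theta}.
\]
Because this integrand is nonnegative and constant in $x$, squaring it and integrating against $(\int_x^1 f(z)\,{\rm d}z)^2$ lets the tail factor out of the spatial integral, giving
\[
\bigg\|\parder{}{x}\int_U(v(\cdot,\bsy)-v_s(\cdot,\bsy))\,\mu_G({\rm d}\bsy)\bigg\|_{L^2(0,1)}\ge\frac{c^2}{2}\bigg(\sum_{j>s}j^{-2\theta}\bigg)\sqrt{\int_0^1\bigg(\int_x^1 f(z)\,{\rm d}z\bigg)^2{\rm d}x}=C_f\sum_{j>s}j^{-2\theta}.
\]

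The final step is a lower bound for the tail sum by comparison with an integral: since $x\mapsto x^{-2\theta}$ is decreasing and $2\theta>1$,
\[
\sum_{j>s}j^{-2\theta}\ge\int_{s+1}^\infty x^{-2\theta}\,{\rm d}x=\frac{(s+1)^{1-2\theta}}{2\theta-1},
\]
which is of the asserted order $s^{-2\theta+1}$ and pins down both the exponent $-2\theta+1$ and the constant $1/(2\theta-1)$. I do not expect a deep obstacle here; the points requiring care are purely ones of orientation, namely that every inequality be applied so as to decrease the quantity (nonnegativity of all factors, the lower tangent ${\rm e}^t-1\ge t$, and discarding the prefactor ${\rm e}^{\frac12\sum_{j\le s}\psi_j(x)^2}\ge1$). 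The one genuinely structural ingredient is that the lower bound $\psi_j(x)\ge c\,j^{-\theta}$ holds \emph{uniformly in $x$}: it is precisely this uniformity that turns the expectation estimate into an $x$-independent constant which separates cleanly from the $f$-dependent factor, so that no power of $s$ is lost when pulling it out of the spatial integral.
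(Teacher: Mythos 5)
Your proposal is correct and follows essentially the same route as the paper's own proof: discard the $L^2$-part of the $H^1$-norm, evaluate the Gaussian integral exactly as ${\rm e}^{\frac12\sum_{j\le s}\psi_j^2}({\rm e}^{\frac12\sum_{j>s}\psi_j^2}-1)$, bound the prefactor below by $1$ and use ${\rm e}^t-1\ge t$ (the paper's $({\rm e}^t-1)^2\ge t^2$ is the same inequality squared), then compare the tail sum with $\int_{s+1}^\infty\tau^{-2\theta}\,{\rm d}\tau$. The only blemish, which you share with the paper itself, is the silent replacement of $(s+1)^{-2\theta+1}$ by $s^{-2\theta+1}$ in the final step, which strictly speaking costs a further constant factor of at most $2^{-2\theta+1}$.
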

\begin{proof} 
Recall that
\begin{align*}
    &\int_U (v(x,\bsy) - v_s(x,\bsy)) \,\mu_G(\mathrm d\bsy) \\&=\int_0^x\bigg(\int_\xi^1 f(z)\,{\rm d}z\bigg) \int_U\left(\alpha(\xi,\bsy)^{-1} - \alpha_s(\xi,\bsy)^{-1}\right)\,\mu_G(\mathrm d\bsy)\,{\rm d}\xi\\
    &= \int_0^x\bigg(\int_\xi^1 f(z)\,{\rm d}z\bigg)  {\rm e}^{\frac12\sum_{j=1}^s \psi_j(\xi)^2}\Big( {\rm e}^{\frac12\sum_{j>s}\psi_j(\xi)^2}-1\Big)\,{\rm d}\xi.
\end{align*}
Since there holds~$\|h\|_{H^1(D)} \ge \|\tfrac{\partial}{\partial x} h\|_{L^2(D)}$ for any $h\in H^1(D)$, we have
\begin{align*}
    &\bigg\|\int_U (v(\cdot,\bsy)-v_s(\cdot,\bsy))\,\mu_G({\rm d}\bsy)\bigg\|_{H^1}^2 \\
    &\ge \int_{0}^1 \bigg(\int_x^1 f(z)\,{\rm d}z\bigg)^2  {\rm e}^{\sum_{j=1}^s \psi_j(x)^2}\Big( {\rm e}^{\frac12\sum_{j>s}\psi_j(x)^2}-1\Big)^2 \,\mathrm dx.
\end{align*}
Noting that $({\rm e}^x-1)^2\ge x^2$ for $x\ge0$, we obtain
\begin{align*}
    \bigg\|\int_U (v(\cdot,\bsy)-v_s(\cdot,\bsy))\,\mu_G({\rm d}\bsy)\bigg\|_{H^1((0,1))}^2 &\ge \int_{0}^1 \bigg(\int_x^1 f(z)\,{\rm d}z\bigg)^2 \frac14\Big(\sum_{j>s}\psi_j(x)^2\Big)^2  \,\mathrm dx\\
    &\ge C_f^2 \Big(\sum_{j>s} j^{-2\theta}\Big)^2. 
\end{align*}
Since
\[
\sum_{j>s} j^{-2\theta} \ge \int_{s+1}^\infty \tau^{-2\theta}\,{\rm d}\tau
= \frac{1}{2\theta-1}(s+1)^{-2\theta+1},
\]
we find that
\[
\bigg\|{\int_{U} (v(\cdot,\bsy)-v_s(\cdot,\bsy))\,\mu_G({\rm d}\bsy)}\bigg\|_{H^1((0,1))}
\ge \frac{C_f}{2\theta-1} \,s^{-2\theta+1}.
\]This proves the assertion.\ifdefined\journalstyle\qed\fi
\end{proof}
As $\theta \searrow 1/p$, the lower bound exhibits the same algebraic rate as the upper bound,
up to multiplicative constants, confirming the asymptotic sharpness of the estimate. Consider for example a sequence $(\psi_j)_{j\geq 1}$ satisfying $\|\psi_j\|_{L^\infty((0,1))}=c j^{-\theta}$ with $\theta>1$. By Lemma~\ref{lem:lb1}, there holds
$$
\bigg\|\int_U (v(\cdot,\bsy)-v_s(\cdot,\bsy))\,\mu_G({\rm d}\bsy)\bigg\|_{H^1((0,1))}\geq \frac{C_f}{2\theta-1}s^{-2\theta+1}.
$$%
Meanwhile, $(\|\psi_j\|_{L^\infty((0,1))})_{j\geq 1}\in\ell^p(\mathbb N)$ for all $p\in(\frac1{\theta},1)$. By Lemma~\ref{lemma:upper1} and choosing $p=\frac{2}{2\theta-\varepsilon}$ for arbitrary $\varepsilon>0$, there holds
$$
\bigg\|\int_U (v(\cdot,\bsy)-v_s(\cdot,\bsy))\,\mu_G({\rm d}\bsy)\bigg\|_{H^1((0,1))}\leq C_1s^{-\frac{2}{p}+1}=C_1s^{-2\theta+1+\varepsilon}.
$$

\begin{lemma}[Dimension truncation lower bound for model problem 2]\label{lem:lb2}
Assume that for some constants $c>0$ and $\theta>1$, there holds
\[
b_j \ge c\,j^{-\theta} \quad \text{for all }  j\ge1.
\]
Under the assumptions placed for model problem 2, there holds
$$
\bigg\|\int_U (w(\cdot,\bsy)-w_s(\cdot,\bsy))\,\mu_G({\rm d}\bsy)\bigg\|_{H^1(D)}\geq \frac{C_{\widetilde{w}}}{2\theta-1} s^{-2\theta+1},
$$
where~$C_{\widetilde{w}} := \frac{c^2\|\widetilde{w}\|_{H^1(D)}}{2}$.
\end{lemma}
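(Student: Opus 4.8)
The plan is to follow the argument of Lemma~\ref{lem:lb1} almost verbatim, taking advantage of the fact that model problem~2 is structurally simpler: since the solution factors as $w(\bsx,\bsy)=\widetilde{w}(\bsx)/\beta(\bsy)$, the spatial and parametric dependencies decouple completely, so the Cauchy--Schwarz step in the spatial variable needed for model problem~1 is entirely absent here.

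First I would exploit this separable form to write
\[
\int_U (w(\cdot,\bsy)-w_s(\cdot,\bsy))\,\mu_G({\rm d}\bsy)=\widetilde{w}(\cdot)\int_U(\beta(\bsy)^{-1}-\beta_s(\bsy)^{-1})\,\mu_G({\rm d}\bsy),
\]
and recall from the proof of the upper bound for model problem~2 that the parametric integral evaluates in closed form to ${\rm e}^{\frac12\sum_{j=1}^s b_j^2}\big({\rm e}^{\frac12\sum_{j>s}b_j^2}-1\big)\ge0$. Taking the $H^1(D)$ norm then factors cleanly into the product of $\|\widetilde{w}\|_{H^1(D)}$ with this nonnegative scalar, so no spatial estimates remain. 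Next I would bound the scalar from below by discarding the prefactor ${\rm e}^{\frac12\sum_{j=1}^s b_j^2}\ge1$ and invoking ${\rm e}^t-1\ge t$ for $t\ge0$, obtaining $\tfrac12\sum_{j>s}b_j^2$; inserting the decay hypothesis $b_j\ge c\,j^{-\theta}$ yields $\tfrac{c^2}{2}\sum_{j>s}j^{-2\theta}$, and the tail is controlled by the integral comparison $\sum_{j>s}j^{-2\theta}\ge\int_{s+1}^\infty\tau^{-2\theta}\,{\rm d}\tau=\frac{1}{2\theta-1}(s+1)^{-2\theta+1}$, exactly as in Lemma~\ref{lem:lb1}. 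Collecting the constants gives the asserted bound with $C_{\widetilde{w}}=\tfrac{c^2\|\widetilde{w}\|_{H^1(D)}}{2}$.

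I do not expect a genuine obstacle: the computation is strictly easier than that of Lemma~\ref{lem:lb1}, because the factorization $w=\widetilde{w}/\beta$ removes the spatial integral and its accompanying Cauchy--Schwarz estimate. The only point deserving the slightest attention is the same one encountered in Lemma~\ref{lem:lb1}, namely passing from $(s+1)^{-2\theta+1}$ to $s^{-2\theta+1}$ in the final line, which is harmless up to the stated multiplicative constant and yields the algebraic rate $s^{-2\theta+1}$ matching the upper bound as $\theta\searrow1/p$.
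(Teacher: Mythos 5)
Your proposal is correct and follows essentially the same route as the paper's own proof: factor $w=\widetilde{w}/\beta$ to pull $\|\widetilde{w}\|_{H^1(D)}$ out of the norm, evaluate the Gaussian integral in closed form as ${\rm e}^{\frac12\sum_{j=1}^s b_j^2}\big({\rm e}^{\frac12\sum_{j>s}b_j^2}-1\big)$, bound below using ${\rm e}^{\frac12\sum_{j=1}^s b_j^2}\ge 1$ and ${\rm e}^t-1\ge t$, insert $b_j\ge c\,j^{-\theta}$, and finish with the tail--integral comparison borrowed from Lemma~\ref{lem:lb1}. The one point you flag, replacing $(s+1)^{-2\theta+1}$ by $s^{-2\theta+1}$, is glossed over in exactly the same way in the paper (it costs at most a factor $2^{2\theta-1}$ in the constant), so your treatment is no less careful than the original.
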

\begin{proof}
Using~$w(\bsx,\bsy) = \widetilde{w}(\bsx) \beta(\bsy)^{-1}$, we obtain
\begin{align*}
    &\bigg\|\int_U (w(\bsx,\bsy) - w_s(\bsx,\bsy)) \,\mu_G(\mathrm d\bsy)\bigg\|_{H^1(D)} \\&=\|\widetilde{w}\|_{H^1(D)} \bigg|\int_U\left(\beta(\bsy)^{-1} - \beta_s(\bsy)^{-1}\right)\,\mu_G(\mathrm d\bsy)\bigg|\\
    &= \|\widetilde{w}\|_{H^1(D)} {\rm e}^{\frac12\sum_{j=1}^s b_j^2}\Big( {\rm e}^{\frac12\sum_{j>s}b_j^2}-1\Big)\\
    &\ge \frac{\|\widetilde{w}\|_{H^1(D)}}{2}\sum_{j>s}b_j^2 \\
    &\ge \frac{\|\widetilde{w}\|_{H^1(D)} c^2}{2}\sum_{j>s}j^{-2\theta}, 
\end{align*}
which leads by the same arguments as in the proof of Lemma~\ref{lem:lb1} to the desired result.\ifdefined\journalstyle\qed\fi\end{proof}

The asymptotic sharpness follows by the same reasoning as above: considering $b_j=cj^{-\theta}$ with $\theta>1$, Lemmas~\ref{lemma:ubmodel2} and~\ref{lem:lb2} imply that
$$
\bigg\|\int_U (w(\cdot,\bsy)-w_s(\cdot,\bsy))\,\mu_G({\rm d}\bsy)\bigg\|_{H^1(D)}\geq \frac{C_{\widetilde{w}}}{2\theta-1}s^{-2\theta+1}
$$
and
$$
\bigg\|\int_U (w(\cdot,\bsy)-w_s(\cdot,\bsy))\,\mu_G({\rm d}\bsy)\bigg\|_{H^1(D)}\leq C_2s^{-2\theta+1+\varepsilon}
$$
for arbitrary $\varepsilon>0$.

\section{Conclusion}

Dimension truncation error analyses have been conducted in the existing literature for a wide class of high-dimensional integration problems arising in PDE uncertainty quantification problems. However, these analyses have been primarily concerned with upper bounds. The present paper demonstrates that the upper bounds are sharp by constructing explicit examples, where the dimension truncation rate $\mathcal O(s^{-2/p+1})$ can be shown to be optimal.

\section*{Acknowledgment}

Vesa Kaarnioja was supported by the Research Council of Finland (Flagship of Advanced Mathematics for Sensing, Imaging and Modelling grant 359183).

\ifdefined\journalstyle
\bibliographystyle{spmpsci}
\else
\bibliographystyle{plain}
\fi
\bibliography{main}
\end{document}